\definecolor{darkred}{RGB}{100,0,0}
\definecolor{darkgreen}{RGB}{0,100,0}
\definecolor{darkblue}{RGB}{0,0,150}
\def\d{{\rm d}}
\newtheorem{theorem}{Theorem}
\newtheorem{proposition}{Proposition}
\newtheorem{contribution}{Contribution}
\theoremstyle{remark}
\newtheorem{remark}{Remark}
\def\beq{\begin{equation}} 
\def\eeq{\end{equation}}
\def\beqn{\begin{eqnarray*}}
\def\eeqn{\end{eqnarray*}}
\def\Bitem{\begin{itemize}\setlength{\itemsep}{.2in}}
\def\bitem{\begin{itemize}\setlength{\itemsep}{.05in}}
\def\eitem{\end{itemize}}
\def\Benum{\begin{enumerate}\setlength{\itemsep}{.2in}}
\def\benum{\begin{enumerate}\setlength{\itemsep}{.05in}}
\def\eenum{\end{enumerate}}
\def\bmult{\begin{multline*}}
\def\emult{\end{multline*}}
\def\bcenter{\begin{center}}
\def\ecenter{\end{center}}
\def\bframe{\begin{frame}}
\def\eframe{\end{frame}}
\newcommand{\thmref}[1]{Theorem~\ref{thm:#1}}
\newcommand{\secref}[1]{Section~\ref{sec:#1}}
\newcommand{\figref}[1]{Figure~\ref{fig:#1}}
\newcommand{\algref}[1]{Algorithm~\ref{alg:#1}}
\DeclareMathOperator*{\argmax}{arg\, max}
\def\cK{\mathcal{K}}
\def\cL{\mathcal{L}}
\def\cS{\mathcal{S}}
\def\bbR{\mathbb{R}}
\def\bbX{\mathbb{X}}
\def\bbZ{\mathbb{Z}}
\newcommand{\E}{\operatorname{\mathbb{E}}}
\renewcommand{\P}{\operatorname{\mathbb{P}}}
\def\Bin{\text{Bin}}
\def\1{\mathbbm{1}}
\definecolor{purple}{rgb}{0.4,.1,.9}
\begin{document}
\thispagestyle{empty}

\title{Estimation of the Global Mode of a Density: \\ Minimaxity, Adaptation, and Computational Complexity}
\author{Ery Arias-Castro\footnote{University of California, San Diego, California, USA (\url{https://math.ucsd.edu/\~eariasca/})} 
\and Wanli Qiao\footnote{George Mason University, Fairfax, Virginia, USA (\url{https://mason.gmu.edu/\~wqiao/})}
\and Lin Zheng\footnote{University of California, San Diego, California, USA (\url{https://math.ucsd.edu/people/graduate-students/})}
}
\date{}
\maketitle

\begin{abstract}
We consider the estimation of the global mode of a density under some decay rate condition around the global mode. We show that the maximum of a histogram, with proper choice of bandwidth, achieves the minimax rate that we establish for the setting that we consider. This is based on knowledge of the decay rate. Addressing the situation where the decay rate is unknown, we propose a multiscale variant consisting in the recursive refinement of a histogram, which is shown to be minimax adaptive. These methods run in linear time, and we prove in an appendix that this is best possible: There is no estimation procedure that runs in sublinear time that achieves the minimax rate. 
\end{abstract}


\section{Introduction} 
\label{sec:intro}

The global mode of a bounded density $f$ on $\bbR^d$ is simply $\argmax_{x \in \bbR^d} f(x)$, which we assume here to be a singleton. It is a particularly important parameter when the density is assumed to be (strongly) unimodal --- in which case it is simply referred to as the mode. In what follows, we say mode to refer to the global mode, even when the density may have multiple local maxima. The problem of estimating the mode of a density dates back to \cite{parzen1962estimation}, who considered a plug-in estimator consisting first in estimating the entire density by kernel density estimation --- a method which had been proposed by \cite{rosenblatt1956remarks} only a few years earlier --- and then in locating the mode of that density estimate. The problem has received a considerable amount of attention since then, partly because it is a prototypical example of a nonparametric point estimation problem -- indeed, one does not need to work through a density estimator to estimate other location parameters such as the mean or median. We refer the reader interested in this long and rich history to a recent survey paper by \cite{chacon2020modal}, where the estimation of multiple local modes is also discussed in light of its intimate connection to the problem of clustering \citep{hartigan1975clustering}. 

Although a number of methods have been proposed in the literature, Parzen's approach and its close variants appears to be the most popular and has been thoroughly studied over the years.
\cite{parzen1962estimation} proved that the estimator was asymptotically normal under some conditions. These conditions were refined over the years, including in a paper by \cite{chernoff1964estimation} who looked at using the uniform kernel (he calls the resulting kernel density estimator ``naive''), which does not satisfy the conditions imposed in  \citep{parzen1962estimation}. The asymptotic normality of the kernel density plug-in estimator was extended to the multivariate setting by \cite{konakov1973asymptotic}, \cite{samanta1973nonparametric}, and later \cite{mokkadem2003law}, who also established various laws of the iterated logarithm.

In this paper we study a closely related method which is based on histograms rather than kernel density estimates. Although the method cannot be said to be really new, it enjoys a number of desirable properties. In the process of establishing and discussing these properties, we also consider questions of convergence rates, computational complexity, and parameter tuning (the bandwidth in Parzen's method).

\subsection{Working assumptions}
\label{sec:setting}
Our basic assumption is that the underlying density behaves like the power function with exponent $\beta$ near its mode and that it is bounded away from its maximum elsewhere. Specifically, we assume that {\em $f$ has a unique mode at $x_0$}, and that, for some $0 < c_0 < C_0$, $h_0 > 0$ and $\beta > 0$,
\begin{equation}
\label{near_mode}
f(x_0) - C_0 \|x-x_0\|^\beta 
\le f(x) 
\le f(x_0) - c_0 \|x-x_0\|^\beta, 
\quad \text{when } \|x-x_0\| \le h_0,
\end{equation}
\begin{equation}
\label{away_mode}
f(x) \le f(x_0) - c_0 h_0^\beta,
\quad \text{when } \|x-x_0\| \ge h_0.
\end{equation}
For convenience, we will also assume that 
\begin{equation}
\label{compact}
\text{\em $f$ has compact support.}
\end{equation}

\subsection{Convergence rates}

Rates of convergence are already implied in \citep{parzen1962estimation}, and were subsequently studied under various assumptions on the underlying density in \citep{eddy1980optimum, vieu1996note, romano1988weak, donoho1991geometrizing, abraham2004asymptotic}, as well as the other publications on the topic mentioned so far. For example, when the density is twice differentiable with bounded second derivative, Parzen's estimator with optimal choice of bandwidth achieves the rate $O(n^{-1/5})$. This was shown to be minimax optimal by \cite{donoho1991geometrizing} under the same conditions as ours displayed in \secref{setting}. Essentially, the density is assumed to have a unique global mode and to behave quadratically in a neighborhood of that mode.
See also the lower bound derived by \cite{tsybakov1990recursive}, although the setting is a little bit different. \cite{romano1988weak} derives local minimax rates with respect to a neighborhood around the density defined by densities that are close up to a certain order: if the underlying density is $C^p$ then the neighborhood consists of densities which are themselves and their derivatives up to order $p-1$ pointwise close in a neighborhood of the mode. Actual minimax rates are derived by \cite{klemela2005adaptive} under similar smoothness assumptions.
As it turns out, assuming that the density is log-concave in addition to twice differentiable does not change the minimax rate of convergence \citep{balabdaoui2009limit}. 

\begin{contribution}
We extend the minimax result of \cite{donoho1991geometrizing} to the general situation where the density behaves like a power function with arbitrary exponent $\beta > 0$ in a neighborhood of its mode. We complement this by showing that the methods we propose achieve the minimax rate. 
\end{contribution}

Confidence intervals or regions for the mode are discussed in a number of publications \citep{romano1988bootstrapping, doss2019inference, dumbgen2008multiscale, genovese2016non, rufibach2010block, eckle2018multiscale, balabdaoui2009limit} under various settings, and they are at least implicit in the papers mentioned earlier discussing the asymptotic normality of the mode since such an asymptotic limit implies an asymptotically valid confidence region (most often an ellipsoid) when the scale parameters are estimated by plug-in.

\subsection{Computational complexity}

The main reason we work with a histogram rather than a kernel density estimator is computational ease: the maximizer of a histogram can be computed in (average) linear time, both in the dimension and the sample size.  

The question of computational complexity has received some attention and has led to variants such as that of \cite{abraham2003simple} who suggest maximizing a kernel density estimator among the sample points, thus avoiding a possibly costly grid search. This might also be a motivation behind some proposals based on nearest neighbors (or spacings in dimension one) as presented in \citep{sager1978estimation, venter1967estimation, dasgupta2014optimal, dalenius1965mode}. Gradient-based estimates such as the mean-shift algorithm of \cite{fukunaga1975estimation} and the closely related procedure proposed by \cite{tsybakov1990recursive} may also have a computational advantage over a grid search approach depending on the refinement of the grid and the number of iterates.

\begin{contribution}
The methods we propose achieve the minimax estimation rate while having linear computational complexity. We show that this is best possible in the sense that no method with sublinear computational complexity can achieve the minimax rate.
\end{contribution}

\subsection{Parameter tuning}

Parzen's method requires a choice of bandwidth. Most of the effort in this direction has been to optimize the accuracy of estimating the density itself and not so much the mode. For example, one can use cross-validation to choose the bandwidth with the intention of minimizing some measure of estimation error for the density --- see \citep{arlot2010survey} and references therein --- and then proceed with Parzen's approach, meaning compute the kernel density estimate with this choice of bandwidth and locate the mode of that estimate. However, in our setting where we impose a condition of the behavior of the density only in a neighborhood of its mode, it is not at all clear that such an estimator would achieve the minimax rate.
It turns out that it does in a setting where the density is assumed twice differentiable everywhere and with strictly negative second derivative at its mode. 
\cite{balabdaoui2009limit} operate under a different global assumption, that the density is log-concave. A maximum likelihood estimator exists under this so-called `shape constraint' alone, and its mode is shown to be minimax optimal under the additional assumption that the density is twice differentiable at its mode.
\cite{klemela2005adaptive} approaches the problem using (and extending) Lepski's method to select the kernel density estimator bandwidth but tailored to the estimation of the mode. The performance rate of the corresponding procedure is established and shown to match the minimax lower bound also derived in the paper for this adaptive setting. This is done assuming that the density is smooth near its mode. 

The problem of selecting a tuning parameter for the estimation of a mode is otherwise addressed via testing for the significance of modes. This is done in a number of papers \citep{duong2008feature, godtliebsen2002significance, genovese2016non, silverman1981using, chacon2013data, rufibach2010block}. This is closely related to the problem of testing for unimodality. We refer the reader to additional references in \citep{eckle2018multiscale} where that connection is made. 

\begin{contribution}
We propose a method --- which is multiscale in nature and performs a sort of bisection search --- that operates in linear time and achieves the non-adaptive minimax rate in our setting.
\end{contribution}

Our approach has antecedents. 
\cite{robertson1974iterative} describes a method that, in dimension one, iteratively focuses on the shortest interval with a certain number of data points, with that number decreasing at a certain rate. The method is shown to be consistent under some mild conditions.
\cite{sager1979iterative} considers a multivariate version based on convex sets. The method is shown to be consistent, and a (suboptimal) rate of convergence is derived in the one-dimensional setting.
\cite{devroye1979recursive} discusses a method based on kernel density estimates at various bandwidth sizes. The method is shown to be consistent but no rate of convergence is provided.

\subsection{Content}
In \secref{mono} we consider the situation where the behavior of the density near its mode is known, meaning that the constants appearing in \secref{setting} are known. We propose a method based on computing a histogram and locating the bin with maximum count, whose performance we establish. We also state a minimax lower bound for this setting, which matches the performance of our method up to a multiplicative constant.
In \secref{multi} we consider the situation where the behavior of the density near its mode is as described in \secref{setting}, but the constants introduced there are unknown. We propose a form of recursive partitioning, which we show achieves the minimax rate established in \secref{mono}, meaning that the method does as well (up to a multiplicative constant) as an optimal method with oracle knowledge of the behavior of the density in the vicinity of its mode.

\subsection{Notation}
Here and elsewhere in the paper, we work with the supnorm, $\|x\| := \max_i |x_i|$ when $x = (x_1, \dots, x_d)$. This is really without loss of generality as we assume the dimension $d$ to be fixed throughout. Indeed, for the problem of estimating a density mode in the nonparametric setting of \eqref{near_mode}-\eqref{away_mode} there is a (standard) curse of dimensionality.

\section{Known behavior near the mode: monoscale approach}
\label{sec:mono}

In this section we assume that we know the parameters describing (in fact, constraining) the behavior of the density, specifically, the constants $c_0, C_0, h_0, \beta$ in \eqref{near_mode} and \eqref{away_mode}. (The density $f$ and its mode $x_0$ remain, of course, unknown.)
This assumption is rather unrealistic in practice, but it is a good place to start, with the question: {\em What would we do, and how well would we do, if we knew this constants?}

With knowledge of these constants, we propose a very simple method, perhaps the simplest one can think of, which effectively amounts to building a histogram and returning the bin with the largest count. The method is obviously very close to a Parzen's method with a kernel proportional to the indicator of one of the bins defining the histogram. The histogram construction is apparently cruder than its smoother kernel density estimate analog, but both methods achieve the same performance rate and the histogram has the advantage of being faster to compute.

\subsection{Method}

The method we have in mind is very simple: It amounts to partitioning the space into bins of equal size and simply returning the location of the bin with the largest count. The bin size needs to be chosen appropriately, based on the behavior of the density near its mode, in order to achieve the minimax rate. The method is compactly described in \algref{mono}. Despite what is hinted at in the literature, the algorithm clearly runs in $O(dn)$ time if we loop over the sample rather than loop over the bins.

\begin{algorithm}[!htpb]
\caption{Mono-scale Mode Hunting}
\label{alg:mono}
\begin{algorithmic}
\STATE {\bf Input:} point set $x_1, \dots, x_n$ in $\bbR^d$ (assumed drawn iid from a density), bin size $h$
\STATE {\bf Output:} a point $\hat x$ (meant to estimate the mode of the underlying density)
\medskip
\STATE {\bf Create} a sparse array of bin counts, where ${\sf BinCount}(k)$ for $k \in \bbZ^d$ stores the number of points in the hypercube $[k h, (k+1) h)$, with all the counts initialized to 0
\STATE {\bf For} $i = 1, \dots, n$, store $k_i = {\sf floor}(x_i/h)$ and update ${\sf BinCount}(k_i) \gets {\sf BinCount}(k_i)+1$  
\STATE {\bf Identify} $\hat k := \argmax_{i = 1, \dots, n}{\sf BinCount}(k_i)$ 
\STATE {\bf Return} $\hat x := \hat k h$
\end{algorithmic}
\end{algorithm}

We quantify the performance of this method by means of the following asymptotic result. (Although a non-asymptotic bound could be stated, we find this version simpler.)

\begin{theorem}
\label{thm:mono_upper}
There is a constant $A > 0$ depending on the constants in \eqref{near_mode} and \eqref{away_mode} such that the mode estimator returned by \algref{mono} is within distance $A h$ of the true location of the mode with probability at least $1 - A \exp(-n h^{d+2\beta}/A)$.
\end{theorem}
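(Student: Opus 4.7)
The strategy is to compare the count in the bin $B_{k_0}$ containing the true mode against the counts in bins far from $x_0$. Let $p_k := \int_{B_k} f$ denote the mass of the bin $B_k = [k h,(k+1)h)$, and call a bin $k$ \emph{bad} if $\|k h - x_0\| > A h$. Because the estimator is an arg-max of counts, the event $\{\|\hat x - x_0\| > A h\}$ is contained in $\{N_{k_0} = 0\} \cup \bigcup_{k \text{ bad}} \{N_k \ge N_{k_0}\}$, so I would bound each piece separately and take a union bound over the bad bins.

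For the mass gaps I would use \eqref{near_mode} and \eqref{away_mode} directly. Every $x \in B_{k_0}$ satisfies $\|x - x_0\| \le h$, so $p_{k_0} \ge (f(x_0) - C_0 h^\beta) h^d$ whenever $h \le h_0$. For a bad bin at sup-norm distance $r := \|k h - x_0\| > A h$, every $x \in B_k$ has $\|x - x_0\| \ge r - h$, so \eqref{near_mode} gives $p_k \le (f(x_0) - c_0 (r-h)^\beta) h^d$ when $r - h \le h_0$, and \eqref{away_mode} gives $p_k \le (f(x_0) - c_0 h_0^\beta) h^d$ otherwise. Choosing $A > 1 + (2 C_0/c_0)^{1/\beta}$ and $h$ small then yields a mass gap $p_{k_0} - p_k \gtrsim r^\beta h^d$ in the mid-range regime and $p_{k_0} - p_k \gtrsim h_0^\beta h^d$ in the far regime.

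For the concentration step I would apply Bernstein's inequality to $N_k - N_{k_0} = \sum_{i=1}^n \bigl(\mathbf{1}_{B_k}(x_i) - \mathbf{1}_{B_{k_0}}(x_i)\bigr)$, a sum of iid random variables in $[-1,1]$ with variance at most $p_k + p_{k_0} = O(h^d)$, yielding
\[
\mathbb{P}(N_k \ge N_{k_0}) \,\le\, \exp\!\left(- c \, \frac{n (p_{k_0} - p_k)^2}{p_{k_0} + p_k}\right).
\]
With the gaps from the previous step, the exponent is at least $c' n r^{2\beta} h^d$ for mid-range $k$ and at least $c' n h_0^{2\beta} h^d$ for far $k$. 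A sup-norm shell decomposition (there are $O((r/h)^{d-1})$ bins at distance $\approx r$) turns the mid-range contribution into an incomplete-gamma integral dominated by the $r = A h$ term, giving $\lesssim \exp(-c'' n h^{d+2\beta})$. Combined with $\mathbb{P}(N_{k_0} = 0) \le \exp(-c n h^d)$ and the far-bin estimate handled below, this produces the claimed bound.

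The main technical obstacle is the far-bin contribution, where a naive union bound over $O(h^{-d})$ bins introduces a polynomial-in-$1/h$ prefactor that does not obviously compress into the Bernstein exponent. My fix is to observe that the conclusion is only non-trivial when $n h^{d+2\beta} \gtrsim A \log A$, and then to use the identity $n h^d = n h^{d+2\beta} \cdot h^{-2\beta}$ together with the fact that $h^{2\beta} \log(1/h)$ is uniformly bounded on $h \in (0,1]$ to convert the $h^{-d}$ prefactor into a mere multiplicative constant in the exponent. In the complementary regime $A e^{-n h^{d+2\beta}/A} \ge 1$, so the conclusion is vacuous. Collecting all contributions and taking $A$ large enough yields the advertised bound $A \exp(-n h^{d+2\beta}/A)$.
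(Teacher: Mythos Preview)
Your argument is correct and follows essentially the same route as the paper's: compute the mass gap $p_{k_0}-p_k$ from \eqref{near_mode}--\eqref{away_mode}, apply Bernstein's inequality, and control the union bound by a shell decomposition indexed by $q \asymp \|k-k_0\|$. Two tactical differences are worth pointing out. First, the paper introduces an intermediate threshold $\tau = n f(x_0)h^d - 2nC_1 h^{d+\beta}$ and applies Bernstein separately to $\{N_{k_0} \le \tau\}$ and to each $\{N_k > \tau\}$; your direct application of Bernstein to the centered difference $N_k - N_{k_0}$ is slightly cleaner and avoids the threshold. Second, the paper's shell sum is written with exponent $q^{2\beta} n h^{d+2\beta}$ uniformly in $q$, which is somewhat loose for $q > h_0/h$ since the gap saturates there; your explicit far-bin treatment --- absorbing the $O(h^{-d})$ prefactor into the $\exp(-c'' n h^d)$ term via $n h^d \ge n h^{d+2\beta}\cdot h^{-2\beta}$ and the boundedness of $h^{2\beta}\log(1/h)$, together with the observation that the claim is vacuous unless $n h^{d+2\beta} \gtrsim A\log A$ --- makes this step more transparent. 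The $\{N_{k_0}=0\}$ event you include is actually redundant once you use $\{N_k \ge N_{k_0}\}$ rather than strict inequality, but including it does no harm.
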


\begin{proof}
Since $f$ is assumed to be compactly supported, it is enough to establish the result when the bin size $h$ is small, and in particular we may take it substantially smaller than $h_0$. 
Also, since $1 - A\exp(-1/A) < 0$ for $A$ large enough, it suffices to establish the result when $n h^{d+2\beta} \ge 1$, which we assume henceforth. 
Below $A_1, A_2, \dots$ are constants that do not depend on $n$ or $h$.

Define 
\begin{equation}
p_k := \int_{[kh, (k+1)h)} f(x) \d x, \quad k \in \bbZ^d,
\end{equation}
which is the probability of one draw from $f$ falling in the bin $[k h, (k+1)h)$.
Also, for a set $\cS \subset \bbR^d$, let $N(\cS)$ denote the number of data points in $\cS$, namely, $N(\cS) := \#\{i: X_i \in \cS\}$. For $\cS$ measurable, we have that $N(\cS)$ is binomial with parameters $n$ and $\int_\cS f(x) \d x$. We also let $N_k$ be short for $N([kh, (k+1)h))$, which is the count for bin $k$.

{\em At the mode.}
First, let's consider what happens at the mode. Let $k_0 = {\sf floor}(x_0/h)$ so that $[k_0 h, (k_0+1)h)$ is the bin that contains the mode. Based on \eqref{near_mode} and the fact that $\|x-x_0\| \le h \le h_0$ for all $x$ in that bin, we have
\begin{align}
p_{k_0}
&= \int_{[k_0 h, (k_0+1)h)} f(x) \d x \\
&\ge \int_{[k_0 h, (k_0+1)h)} (f(x_0) - C_0 \|x-x_0\|^\beta) \d x \\
&= f(x_0) h^d - C_0 \int_{[k_0 h, (k_0+1)h)} \|x-x_0\|^\beta \d x \\
&\ge f(x_0) h^d - C_0 \int_{[k_0 h, (k_0+1)h)} \|x-k_0 h\|^\beta \d x \\
&= f(x_0) h^d - C_1 h^{d+\beta}, \quad \text{where } C_1 := C_0 \int_{[0, 1)^d} \|x\|^\beta \d x.
\end{align}
Hence, $N_{k_0}$ is stochastically larger than the binomial distribution with parameters $n$ and $p'_{k_0} := f(x_0) h^d - C_1 h^{d+\beta}$, assuming (as we do) that $h$ is small enough that $p'_{k_0} > 0$.
Applying Bernstein's inequality, we thus establish
\begin{align}
\P\big(N_{k_0} > n p'_{k_0} - s \sqrt{n f(x_0) h^d}\big)
\ge 1 - \exp\big(- \tfrac14 (s^2 \wedge s \sqrt{n f(x_0) h^d})\big),
\end{align}
when $h$ is small enough that $f(x_0) h^d \le 1/2$.
By choosing $s$ such that $s \sqrt{n f(x_0) h^d} = C_1 h^{d+\beta} n$, and taking into account that we assume that $n h^{d+2\beta} \ge 1$, we find that
\begin{align}
N_{k_0} 
&> \tau := n f(x_0) h^d - 2 n C_1 h^{d+\beta}, \label{tau1}
\end{align}
with probability at least $1-\exp(- n h^{d+2\beta}/A_0)$.

{\em Away from the mode.}
We now turn to a bin away from the bin containing the mode. Based on \eqref{near_mode}-\eqref{away_mode}, we have
\begin{align}
p_k
&= \int_{[k h, (k+1)h)} f(x) \d x \\
&\le \int_{[k h, (k+1)h)} \big(f(x_0) - c_0 (\|x-x_0\| \wedge h_0)^\beta\big) \d x \\
&= f(x_0) h^d -c_0 \int_{[k h, (k+1)h)} (\|x-x_0\| \wedge h_0)^\beta \d x \\
&\le f(x_0) h^d -c_0 ((\|k-k_0\|-2) \wedge (h_0/h))^\beta h^{d+\beta},
\end{align}
by the triangle inequality.
For $q \ge 1$ integer, define $\cK_q := \{q \in \bbZ^d : \|k-k_0\| = q+2\}$, and note that 
\begin{equation}
p_k
\le f(x_0) h^d -c_0 (q^\beta \wedge (h_0/h)^\beta) h^{d+\beta}, \quad \forall k \in \cK_q,
\end{equation}
and that $\cK_q$ has cardinality $|\cK_q| \le A_1 q^{d-1}$. We assume henceforth that $h$ is small enough that $c_0 (h_0/h)^\beta \ge 4 C_1$ and restrict ourselves to $q \ge q_0$ where $q_0 \ge 2$ is an integer large enough that $c_0 q_0^\beta \ge 4 C_1$. We now bound the probability that $\hat k$ belongs to some $\cK_q, q \ge q_0$. In view of \eqref{tau1}, we only need to look at the event
\begin{equation}
\max_{q \ge q_0} \max_{k \in \cK_q} N_k > \tau.
\end{equation}
Note that we may restrict our attention to $q \le A_2/h$, since $p_k = 0$ when $kh$ is sufficiently large by the fact that $f$ has compact support. We may assume that $A_2 \ge h_0$.
Therefore, take $q \le A_2/h$ so that $p_k \le f(x_0) h^d -c_0 q^\beta h^{d+\beta}$. 
Then, for $k \in \cK_q$, since $N_k \sim \Bin(n, p_k)$, using Bernstein's inequality, we derive
\begin{align}
\P(N_k > \tau) 
&= \P\big(N_k > n p_k + \tau - n p_k\big) \\
&\le \P\big(N_k - n p_k > n (c_0 q^\beta - 2 C_1) h^{d+\beta}\big) \\
&\le \P\big(N_k - n p_k > n \tfrac12 c_0 q^\beta h^{d+\beta}\big) \\
&\le \exp\left(-\frac{\frac12 (n \tfrac12 c_0 q^\beta h^{d+\beta})^2}{n p_k(1-p_k) + \frac13 (n \tfrac12 c_0 q^\beta h^{d+\beta})}\right) \\
&\le \exp\left(-\frac{\frac12 (n \tfrac12 c_0 q^\beta h^{d+\beta})^2}{n f(x_0) h^d + \frac13 (n \tfrac12 c_0 (h_0/h)^\beta h^{d+\beta})}\right) \\
&\le \exp\big(- q^{2\beta} n h^{d+2\beta}/A_3\big).
\end{align}
Using the union bound, we thus obtain
\begin{align}
\P\big(\max_{q \ge q_0} \max_{k \in \cK_q} N_k > \tau\big)
&\le \sum_{q = q_0}^{{\sf floor}(A_2/h)} A_1 q^{d-1} \exp\big(- q^{2\beta} n h^{d+2\beta}/A_3\big) \\
&\le \sum_{q = q_0}^{\infty} A_1 q^{d-1} \exp\big(- q^{2\beta} n h^{d+2\beta}/A_3\big) \\
&\le \sum_{q = q_0}^{\infty} \exp\big(- q^{2\beta} n h^{d+2\beta}/A_4\big) \\
&\le \int_{q_0-1}^\infty \exp\big(- u^{2\beta} n h^{d+2\beta}/A_4\big) \d u \\
&\le \frac{\exp\big(- (q_0-1)^{2\beta} n h^{d+2\beta}/A_4\big)}{2 \beta (q_0-1)^{2\beta-1} n h^{d+2\beta}/A_4} \\
&\le A_5 \exp\big(- q_0^{2\beta} n h^{d+2\beta}/A_5\big),
\end{align}
using the fact that $q_0 \ge 2$ and $n h^{d+2\beta} \ge 1$ multiple times. The integral was bounded using integration by parts.

We thus have that 
\begin{equation}
N_{k_0} > \tau \ge \max_{q \ge q_0} \max_{k \in \cK_q} N_k
\end{equation}
with probability at least
\begin{equation}
1 - \exp(- n h^{d+2\beta}/A_0) - A_5 \exp\big(- q_0^{2\beta} n h^{d+2\beta}/A_5\big),
\end{equation}
and from this we conclude.
\end{proof}

\subsection{Information bound}

Based on the performance bound established in \thmref{mono_upper}, we can say that \algref{mono} achieves the rate $O(n^{-1/(d+2\beta)})$.
It turns out that this rate is best possible in a minimax sense. This was already known for the exponent $\beta = 2$ \citep{donoho1991geometrizing} --- see also \citep{tsybakov1990recursive}, where the assumed conditions are a little different.
We complete the picture by establishing this as the minimax rate for any value of $\beta > 0$.

\begin{theorem}
\label{thm:mono_lower}
There is a constant $A$ and two densities satisfying the basic properties \eqref{near_mode}-\eqref{away_mode}-\eqref{compact} with modes separated by $n^{-1/(d+2\beta)}/A$ that cannot be distinguished with more accuracy than a probability of error of $1/5$, based on a sample of size $n$.
\end{theorem}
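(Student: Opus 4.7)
The plan is to use Le Cam's two-point method. Define $\delta_n := n^{-1/(d+2\beta)}/A$ for a constant $A > 0$ to be chosen at the end. The goal is to build two densities $f_0, f_1$ that both satisfy \eqref{near_mode}-\eqref{away_mode}-\eqref{compact} (with a common set of constants), whose modes $v_0, v_1$ satisfy $\|v_0 - v_1\| \ge \delta_n$, and for which $d_{\rm TV}(f_0^{\otimes n}, f_1^{\otimes n}) \le 3/5$. This forces the Bayes error of any test of $H_0\colon f_0$ versus $H_1\colon f_1$ based on an iid sample of size $n$ to be at least $(1 - 3/5)/2 = 1/5$, which is exactly what the theorem asks for.

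I would use a two-bump construction. Fix a smooth compactly supported base density $\bar f$ on $\bbR^d$, radially symmetric about the origin, of the form $\bar f(x) \propto M - c \|x\|^\beta$ on $B(0, 2h_0)$ and smoothly tapered to zero outside. Let $v_0 := -\tfrac12 \delta_n e_1$ and $v_1 := +\tfrac12 \delta_n e_1$, and for $v \in \bbR^d$ define the power-law bump
$$\phi_v(x) := (c_* \delta_n^\beta - c_* \|x - v\|^\beta)_+,$$
supported in $B(v, \delta_n)$ with peak height $c_* \delta_n^\beta$. The candidate hypotheses are
$$f_i(x) := Z^{-1}\bigl(\bar f(x) + \phi_{v_i}(x)\bigr), \quad i \in \{0, 1\},$$
with $Z := \int \bar f + \int \phi_{v_0} = \int \bar f + \int \phi_{v_1}$ (equal by translation invariance), so both $f_0, f_1$ integrate to $1$. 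With $c_* > c$, each $f_i$ has a unique global mode in $B(v_i, \delta_n)$, and the two modes are separated by $\Theta(\delta_n)$; taking $A$ large enough makes the separation at least $\delta_n/A$.

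The Hellinger bound is then immediate: $f_0 = f_1$ outside $B(v_0, \delta_n) \cup B(v_1, \delta_n)$, a set of volume $O(\delta_n^d)$, and on this set $|f_0 - f_1| \le 2 Z^{-1} c_* \delta_n^\beta$. Since $\bar f$ is uniformly bounded below by a positive constant on this region, so are $f_0, f_1$, and
$$H^2(f_0, f_1) \le \int \frac{(f_0 - f_1)^2}{f_0 + f_1}\,\mathrm{d}x = O(\delta_n^{2\beta}) \cdot O(\delta_n^d) = O(\delta_n^{d+2\beta}).$$
Tensorization yields $H^2(f_0^{\otimes n}, f_1^{\otimes n}) \le n H^2(f_0, f_1) = O(1/A^{d+2\beta})$, so $d_{\rm TV}(f_0^{\otimes n}, f_1^{\otimes n}) \le H(f_0^{\otimes n}, f_1^{\otimes n}) = O(A^{-(d+2\beta)/2})$, which is below $3/5$ for $A$ large.

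The main obstacle is verifying \eqref{near_mode} at each $v_i$ with constants $c_0, C_0$ independent of $n$. The bump $\phi_{v_i}$ dictates the behavior of $f_i(v_i) - f_i(x)$ at the inner scale $\|x - v_i\| \le \delta_n$ (with constants proportional to $c_*$), while the base density $\bar f$ dictates it at the intermediate scale $\delta_n < \|x - v_i\| \le h_0$ (with constants proportional to $c$). Superposing these two power laws and checking that the net two-sided bound falls within a prescribed interval $[c_0, C_0]$, uniformly in $n$, is the only delicate bookkeeping in the proof; everything else --- the Hellinger estimate on near-disjointly-supported bumps, tensorization, and Le Cam's inequality --- is standard.
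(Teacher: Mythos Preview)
Your Le Cam/Hellinger portion is sound: the perturbations have amplitude $O(\delta_n^\beta)$ on a region of volume $O(\delta_n^d)$, so $H^2(f_0,f_1)=O(\delta_n^{d+2\beta})$, and tensorization plus Le Cam's inequality give the indistinguishability. The difficulty you flag --- verifying \eqref{near_mode} with $n$-independent constants --- is real, and your additive construction does not survive it for general $\beta$.

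The problem is the superposition of bumps. Inside $B(v_i,\delta_n)$ you have, up to normalization, $f_i(x)= M - c\|x\|^\beta + c_*\delta_n^\beta - c_*\|x-v_i\|^\beta$, a sum of two power-law profiles centered at $0$ and at $v_i$. For $\beta>1$ both summands are differentiable at $v_i$, and the gradient of the base term there is nonzero (of order $\delta_n^{\beta-1}$, pointing toward the origin), so $v_i$ is \emph{not} a critical point: the actual mode $w_i$ lies strictly between $0$ and $v_i$. At $w_i$ both $\|x\|^\beta$ and $\|x-v_i\|^\beta$ are smooth, and a second-order expansion gives
\[
f_i(w_i)-f_i(x)\ \asymp\ \delta_n^{\beta-2}\,\|x-w_i\|^2 \qquad \text{for } \|x-w_i\|\ll\delta_n.
\]
At scale $\|x-w_i\|=\epsilon\,\delta_n$ the ratio of this to $\|x-w_i\|^\beta$ is $\asymp\epsilon^{2-\beta}$, which tends to $0$ when $1<\beta<2$ and to $\infty$ when $\beta>2$ as $\epsilon\to0$. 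Since $\epsilon$ may be chosen arbitrarily small independently of $n$, no choice of $c_0,C_0$ independent of $n$ can make \eqref{near_mode} hold, and $f_i$ lies outside the model class for every $\beta>1$ with $\beta\neq 2$. (For $\beta\le 1$ the cusp of $\phi_{v_i}$ does pin the mode at $v_i$, and the bookkeeping you describe can be carried out; for $\beta=2$ the quadratic behavior is exactly what is required.)

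The paper's construction avoids this by \emph{replacing} rather than \emph{adding}: it excises the cube $(-\underline{h},\underline{h})$ from the base density, flattens most of it to the constant level $1-h^\beta$, and inserts a single fresh bump $1+(2^d-1)h^\beta-2^{d+\beta}\|t-\underline{h}/2\|^\beta$ on a sub-cube. Near the new mode the density is then a pure $\|\cdot\|^\beta$ profile --- there is no second bump to interfere --- so \eqref{near_mode} holds with explicit constants not depending on $n$. A chi-squared bound $\chi^2(f_2,f_1)=O(h^{d+2\beta})$ replaces your Hellinger bound, and the two-point reduction concludes as in your outline.
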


\begin{proof}
The proof is based on Le Cam's two-point prior argument for which a standard reference is \cite[Sec~2.2-2.4]{tsybakov2008introduction}. The idea is to craft two densities, both satisfying the basic properties \eqref{near_mode}-\eqref{away_mode}-\eqref{compact}, that are impossible to distinguish with a high degree of certainty based on a sample of size $n$ and whose modes are on the order of $\asymp n^{-1/(d+2\beta)}$ apart. These densities are denoted $f_1$ and $f_2$ below.

Let $f_1$ be a density on $\bbR^d$, compactly supported, symmetric about the origin (i.e., even as a function), strictly unimodal (and therefore with a unique mode at the origin), and such that $f_1(t) = 1 - \|t\|^\beta$ in a neighborhood of the origin. This implies that there exists $h_0\in(0,1)$ such that $f_1(t) = 1 - \|t\|^\beta$ if $\|t\|\le h_0$, and $f_1(t) \le 1 - h_0^\beta$ if $\|t\| \ge h_0$. Clearly, $f_1$ satisfies the basic properties \eqref{near_mode}, \eqref{away_mode}, and \eqref{compact}.  

Denote the origin by $\underline{0}$ and let $\underline{h}=(h,\cdots,h)\in\bbR^d$ for any $h>0$. We consider $h\le h_0$ below. Define $f_2$ on $\bbR^d$ as follows 
\begin{equation}
f_2(t) = 
\begin{cases}
f_1(t), & \text{if } t\in \bbR^d \setminus (-\underline{h}, \underline{h}) , \\
1-h^\beta, & \text{if } t \in (-\underline{h}, \underline{h}) \setminus (\underline{0}, \underline{h}) , \\
1+(2^d-1)h^\beta-2^{d+\beta}\|t - \underline{h}/2\|^\beta, & \text{if } t \in (\underline{0}, \underline{h}). 
\end{cases}
\end{equation}
See \figref{information_bound} for an illustration.

\begin{figure}[ht]
\centering
\includegraphics[scale=0.23]{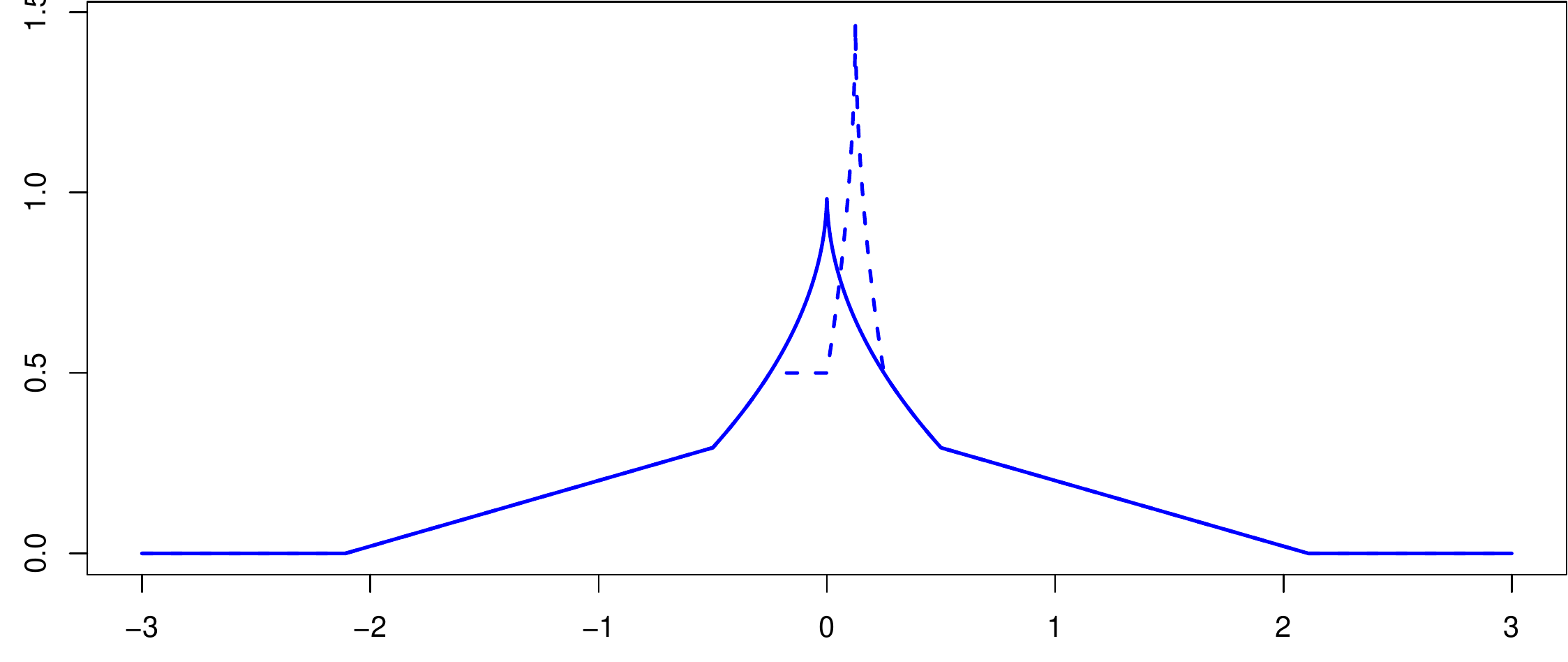}
\includegraphics[scale=0.23]{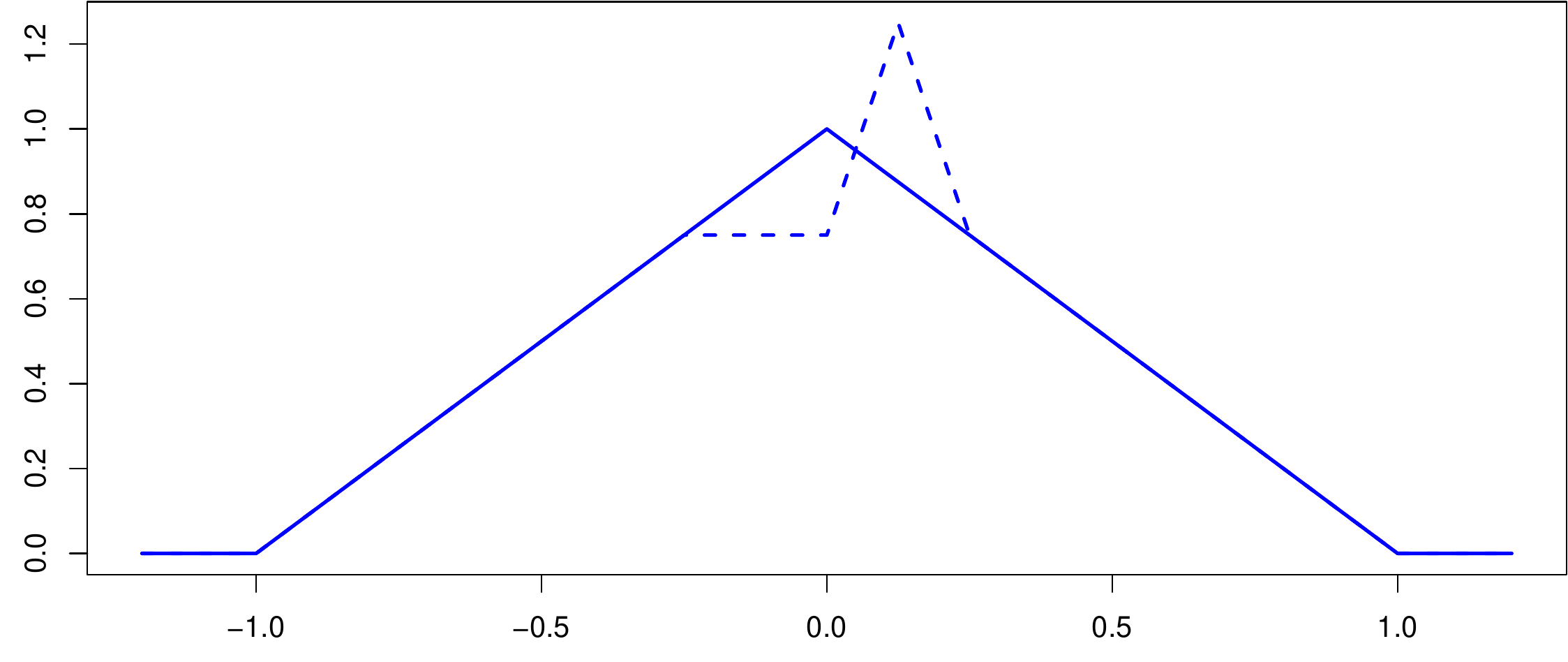}
\includegraphics[scale=0.23]{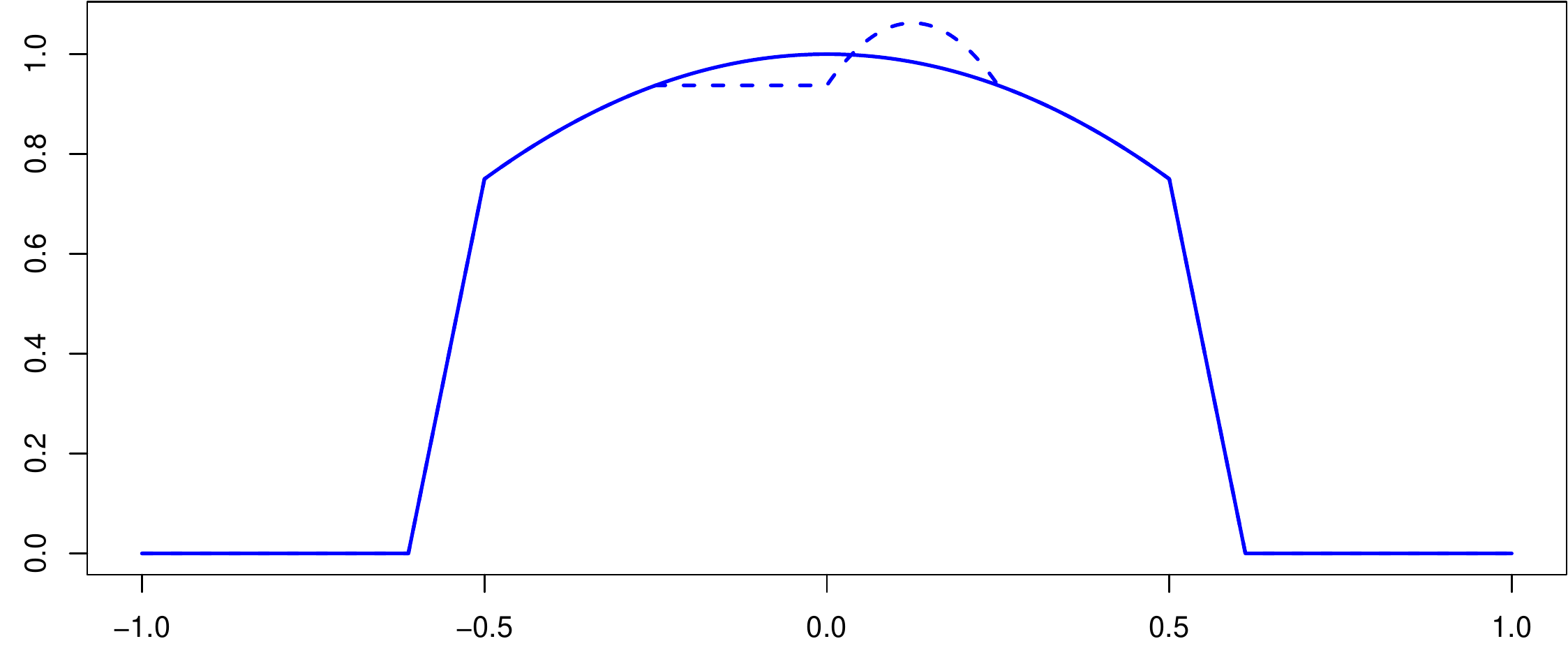}
\caption{\small Examples of pairs of functions $f_1$ (solid) and $f_2$ (dashed) for $\beta = 1/2$, $\beta = 1$, and $\beta = 2$.}
\label{fig:information_bound}
\end{figure}

Notice that we can also write $f_2(t)=f_1(t) + g(t)$, where
\begin{equation}
g(t) = (\|t\|^\beta - h^\beta) \1\{t \in (-\underline{h}, \underline{h}) \setminus (\underline{0}, \underline{h})\} + [(2^d-1)h^\beta-2^{d+\beta}\|t-\underline{h}/2\|^\beta] \1\{t \in (\underline{0}, \underline{h})\}.
\end{equation}
Here $f_2$ is indeed a density function because $f_2\ge 0$ and $\int g(x) dx = 0$ using the fact that
\begin{align}
\int_{(-\underline{h}, \underline{h}) } \|x\|^\beta \d x = \int_0^h s^\beta (2d) (2s)^{d-1}ds = \frac{d}{d+\beta} 2^d h^{d+\beta}.
\end{align}
Observing that $f_2$ has a unique mode at $\underline{h}/2$, below we show that, for all $t$ such that $\|t - \underline{h}/2\| \le h_0 - h/2$, 
\begin{align}\label{f2_bounds}
f_2(\underline{h}/2) - 2^{d+\beta}\|t-\underline{h}/2\|^\beta \le  f_2(t) \le f_2(\underline{h}/2) - 2^{-\beta}\|t-\underline{h}/2\|^\beta.
\end{align}
It is easy to see that \eqref{f2_bounds} holds for $t \in (-\underline{h}, \underline{h})$. For $t\in[\underline{h} - \underline{h_0}, \underline{h_0}] \setminus (-\underline{h}, \underline{h})$, we have
\begin{align}
&f_2(\underline{h}/2) - 2^{d+\beta}\|t-\underline{h}/2\|^\beta\\
&= (1-\|2t - \underline{h}\|^\beta) + (2^d-1) (h^\beta - \|2t - \underline{h}\|^\beta)\\
&\le 1 - \|t\|^\beta = f_2(t),
\end{align}
and
\begin{align}
& f_2(\underline{h}/2) - 2^{-\beta}\|t-\underline{h}/2\|^\beta \\
&= 1+(2^d-1)h^\beta - 2^{-\beta}\|t-\underline{h}/2\|^\beta \\
&\ge 1+(2^d-1)h^\beta - [(h/2)^\beta + \|t\|^\beta] \\
&\ge 1 - \|t\|^\beta = f_2(t).
\end{align}
The last calculation can also be used to show that $f_2(t) \le f_2(\underline{h}/2) - 2^{-\beta}(h_0-h/2)^\beta$ for all $t$ such that $\|t - \underline{h}/2\| \ge h_0 - h/2$. Hence $f_2$ also satisfies the properties \eqref{near_mode}, \eqref{away_mode}, and \eqref{compact}.  

Define $\chi^2(f_2, f_1) := \int f_2^2/f_1 - 1$, which is sometimes called the chi-squared divergence of $f_2$ with respect to $f_1$. According to \cite[Sec~2.2-2.4]{tsybakov2008introduction}, to conclude it suffices to prove that $n \chi^2(f_2, f_1)$ becomes arbitrarily small when $n h^{d+2\beta}$ is small enough. 
We prove this by showing below that $\chi^2(f_2, f_1) = O(h^{d+2\beta})$.
Indeed, elementary calculations yield
\begin{align}
\int \frac{f_2(x)^2}{f_1(x)} \d x - 1
&= \int \frac{g(x)^2}{f_1(x)} \d x\\
&\le \int_{(-\underline{h}, \underline{h})} \frac{(\|t\|^\beta - h^\beta)^2}{f_1(x)}\d x + \int_{(\underline{0}, \underline{h})} \frac{[(2^d-1)h^\beta-2^{d+\beta}\|t-\underline{h}/2\|^\beta]^2}{f_1(x)}\d x \\
& \le \frac{2^d h^{d+2\beta}}{1-h_0^\beta} + \int_{(\underline{0}, \underline{h})} \frac{2^{2(d+\beta)}[h^\beta + \|t-\underline{h}/2\|^\beta]^2}{1-h_0^\beta}\d x\\
& \le \frac{2^{2(d+\beta+3)} }{1-h_0^\beta} h^{d+2\beta},
\end{align}
and from this we conclude.
\end{proof}

\thmref{mono_upper} and \thmref{mono_lower}, together, establish $n^{-1/(d+2\beta)}$ as the minimax rate for estimating the mode under the conditions \eqref{near_mode}-\eqref{away_mode}-\eqref{compact} --- where the emphasis should be on the first one. This extends the result of \cite{donoho1991geometrizing}, who proved this for $\beta = 2$ is dimension $d = 1$. (The method they studied and showed to be minimax was none other than Parzen's method with a proper choice of bandwidth.) It turns out that this is the same rate as under the more restrictive assumption that the density is twice differentiable with bounded second derivative in the vicinity of the mode and with negative definite Hessian at the mode. This was established by \cite{tsybakov1990recursive}, who went further: If the density is H\"older-$\alpha$ with $\alpha \ge 2$ near its mode, and the Hessian there is negative definite, then the minimax rate is $n^{-(\alpha-1)/(d+2\alpha)}$, and is achieved by a gradient ascent method proposed in the same paper. This rate is faster than what it is in our setting where, under the same assumptions, still corresponds to $\beta = 2$.

\paragraph{Computational complexity}
We have therefore established that \algref{mono}, which runs in linear time, is minimax rate optimal when its tuning parameter (the bin size $h$) is properly chosen. Is it possible, however, to do even better in the sense of designing an algorithm that runs in sublinear time that also 	achieves the minimax estimation rate? The answer is `No', and this is general: In a very broad sense, it is not possible to achieve a minimax rate in sublinear time in an estimation problem where, as is the case here, that rate is a negative power of the sample size (perhaps with some poly-logarithmic multiplicative factor). 
See \secref{sublinear} for details.


\section{Unknown behavior near the mode: multiscale approach}
\label{sec:multi}

Choosing the bin size correctly in \algref{mono} is very important. It is completely analogous to choosing the bandwidth to build a histogram or to perform kernel density estimation. All the methods we are aware of necessitate the tuning of parameters whose optimal value, as in our case, depends on the behavior of the density near its mode. 

In the special situation where the density is twice differentiable everywhere and has a negative definite Hessian at the mode --- which necessarily forces $\beta = 2$ --- Parzen's estimate with bandwidth chosen by cross-validation appears to achieve the minimax rate because {\em (i)} the optimal choice of bandwidth is the same, in order of magnitude, for the problem of density estimation and the problem of mode estimation; and {\em (ii)} a choice of bandwidth based on cross-validation achieves the optimal rate for the problem of density estimation as established by \cite{hall1983large} and \cite{stone1984asymptotically}. 

We propose a multiscale method that is able, under some conditions, to zoom in on the mode without assuming much of the underlying density and achieve the minimax error rate. Moreover, the method still operates in (expected) linear time. The method, in principle, still depends on a couple of parameters, but these can be chosen with much less knowledge of the underlying density. 
{\em And by letting these parameters diverge to infinity arbitrarily slowly, the method is, in effect, parameter-free.}

We are not aware of any method that is able to choose these tuning parameters automatically while achieving the minimax performance rate, except for the that of \cite{klemela2005adaptive}. In that paper, the general approach advocated by Lepski for selecting tuning parameters is implemented and shown to yield a choice of bandwidth which leads to an {\em adaptive} minimax rate. Indeed, the paper also derives minimax rates for when the density smoothness at the mode is unknown, and these rates are different from those when the smoothness at the mode is known: there is a price to pay. Under the looser conditions that we operate under, it turns out that there is no price to pay.

\subsection{Method}

When the parameters in \eqref{near_mode}-\eqref{away_mode}, in particular the exponent $\beta$, are unknown, we adopt a recursive partitioning approach. 
The resulting method is described in \algref{multi} where the bin counts are implicitly computed as in \algref{mono}.

\begin{algorithm}[!htpb]
\caption{Multi-scale Mode Hunting}
\label{alg:multi}
\begin{algorithmic}
\STATE {\bf Input:} point set $x_1, \dots, x_n$ in $\bbR^d$, scale multiplier $b \ge 2$, margin $\kappa \ge 0$
\STATE {\bf Output:} a point $\hat x$ (meant to estimate the mode of the underlying density)
\medskip
\STATE Define the finest scale $s_{\rm max} := {\sf floor}(\log(n)/d \log b)$
\STATE Initialize the active set to be $I_{\rm active} \gets \{1, \dots, n\}$
\STATE {\bf For} $s = 1, \dots, s_{\rm max}$
\STATE {\bf For} $k \in \bbZ^d$ identify $I(k, s) = \{i \in I_{\rm active} : x_i \in [k b^{-s}, (k+1) b^{-s})\}$
\STATE {\bf EndFor}
\STATE Identify $\hat k(s) := \argmax_k \# I(k, s)$
\STATE Update $I_{\rm active} \gets \bigcup \{I(k,s) : \|k-\hat k(s)\| \le \kappa\}$
\STATE {\bf EndFor}
\STATE {\bf Return} $\hat x := \hat k(s_{\rm max}) b^{-s_{\rm max}}$
\end{algorithmic}
\end{algorithm}

\begin{proposition}
\algref{multi} runs in linear expected time.
\end{proposition}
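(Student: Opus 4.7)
The plan is to bound the expected work done at each scale and then sum a convergent geometric series. With bin counts maintained in a sparse hash table (as in \algref{mono}), processing scale $s$ takes $O(d \cdot |I_{\rm active}(s)|)$ expected time: for each active point we compute its bin index $\lfloor x_i / b^{-s} \rfloor$ in $O(d)$ time, increment a count, and then scan the active points once more to identify the argmax. (The ``for $k \in \bbZ^d$'' in the pseudocode is meant to range only over the bins actually encountered, i.e.\ the keys of the hash table.) So it suffices to show that $\sum_{s=1}^{s_{\rm max}} \mathbb{E}[|I_{\rm active}(s)|] = O(n)$, where the expectation is over the random sample.

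The key observation is that $\mathbb{E}[|I_{\rm active}(s)|]$ decays geometrically in $s$. By construction, the active set entering scale $s$ sits inside the union of the $(2\kappa+1)^d$ scale-$(s-1)$ bins around $\hat k(s-1)$, a region of volume at most $(2\kappa+1)^d b^{-d(s-1)}$. Bounding the cluster count by $(2\kappa+1)^d$ times the single-bin maximum gives $|I_{\rm active}(s)| \le (2\kappa+1)^d \max_k N_k^{(s-1)}$, where $N_k^{(s-1)}$ is the count in scale-$(s-1)$ bin $k$. Each $N_k^{(s-1)}$ is $\mathrm{Binomial}(n, p_k)$ with $p_k \le \|f\|_\infty b^{-d(s-1)}$, and the compact support of $f$ ensures only $O(b^{d(s-1)}) = O(n)$ bins have $p_k > 0$. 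A Bernstein tail bound plus union bound --- essentially the same calculation already carried out in the proof of \thmref{mono_upper} --- yields
\begin{align*}
\mathbb{E}\bigl[\max_k N_k^{(s-1)}\bigr]
\;=\; O\bigl(n b^{-d(s-1)} + \sqrt{n b^{-d(s-1)} \log n} + \log n\bigr).
\end{align*}

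Summing over $s = 1, \dots, s_{\rm max} = O(\log n)$, the leading term gives $O(n) \sum_{s \ge 0} b^{-ds} = O(n)$ (since $b \ge 2$), and the lower-order terms sum to $O(\sqrt{n \log n}) + O(\log^2 n) = o(n)$. Combined with the deterministic $O(dn)$ cost of scale $1$ (where the full sample is active), the total expected running time is $O(dn) = O(n)$, with $d$ treated as a fixed constant per the paper's standing assumption.

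The main obstacle is making sure the bound on $\mathbb{E}[\max_k N_k^{(s-1)}]$ is tight enough. The naive bound $\max_k N_k^{(s-1)} \le n$ would yield $O(n \log n)$ total work and miss linearity; the concentration-based bound above is what extracts the geometric decay that lets the sum converge. Everything else is routine geometric summation.
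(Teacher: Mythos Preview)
Your argument is correct and follows the same skeleton as the paper's: bound the per-scale cost by $O(d\,|I_{\rm active}(s)|)$, show the expected active-set size decays geometrically in $s$, and sum. The difference is in how the decay is justified. The paper simply asserts that at scale $s$ one ``expects at most $n\bigl(f(x_0)[(2\kappa+1)b^{-s+1}]^d\wedge 1\bigr)$'' active points, treating the active region as if its location were deterministic; this is heuristic, since the region is centered at the data-dependent $\hat k(s-1)$, and the bins being kept are precisely those with the largest counts. You instead bound $|I_{\rm active}(s)|\le (2\kappa+1)^d\max_k N_k^{(s-1)}$ and control $\mathbb{E}[\max_k N_k^{(s-1)}]$ via Bernstein plus a union bound over the $O(b^{d(s-1)})=O(n)$ occupied bins, obtaining the leading term $n\,b^{-d(s-1)}$ plus lower-order $\sqrt{n b^{-d(s-1)}\log n}+\log n$ corrections. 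This is the honest way to handle the data-dependence and makes the ``linear \emph{expected} time'' claim rigorous; the extra terms sum to $o(n)$ as you note, so nothing is lost. In short: same route, but you have supplied the concentration step that the paper leaves implicit.
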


\begin{proof}
At each scale, the main computational task is to identify the bins where the sample points that remain active reside. We saw when discussing the computational complexity of \algref{mono} that doing this can be done in time $O(d m)$ if there are $m$ active points.
At scale $s = 1$, all points are active, and so the resulting time complexity is $O(dn)$.
At scale $s > 1$, we expect at most $n \big(f(x_0) [(2\kappa+1) b^{-s+1}]^d \wedge 1\big) \asymp n \big((\bar\kappa b^{-s})^d \wedge 1\big)$ active points to process, where $\bar\kappa := \kappa \vee 1$, resulting in a complexity of $O(d n (\bar\kappa^d b^{-ds} \wedge 1))$ at that scale.
Summing these expected computational costs over $s = 1, \dots, s_{\rm max}$ yields an overall computational cost bounded by a constant multiple of 
\[
d n + \sum_{s \ge 1} d n (\bar\kappa^d b^{-ds} \wedge 1) \asymp d n \big(\tfrac{\log \bar\kappa}{\log b} + 1\big).
\]
(We have assumed that $d$ is constant, as there is a real curse of dimensionality in the context that interests us here, but we carried it throughout these computations to display its influence, which can be seen to be rather benign.)
\end{proof}

\begin{theorem}
\label{thm:multi_upper}
There is a constant $A > 0$ depending on the constants in \eqref{near_mode} and \eqref{away_mode} such that the mode estimator returned by \algref{mono} is within distance $t n^{-1/(d+2\beta)}$ of the true location of the mode with probability at least $1 - (A/\log b) (\kappa b^2/t)^{d+2\beta} \exp(-(t/\kappa b^2)^{d+2\beta}/A)$ whenever $t \ge 1$, $b \ge A$, and $\kappa \ge A$, as well as $(\kappa+1) b^2/t  \le n^{2\beta/d(d+2\beta)}$.
\end{theorem}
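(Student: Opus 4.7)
The plan is to argue inductively over scales $s = 1, \dots, s^*$ for an appropriately chosen critical scale $s^*$, showing that with high probability $\|\hat k(s) - k_0(s)\| \le \kappa$ at every such scale, where $k_0(s) := {\sf floor}(x_0 b^s)$ denotes the index of the bin containing the mode at scale $s$. Under this inductive hypothesis, the scale-$s$ bin with index $k_0(s)$ lies inside the scale-$(s-1)$ bin $k_0(s-1)$, which by the previous scale's success is in the active set; consequently the counts $\# I(k,s)$ for bins near $k_0(s)$ coincide with the unrestricted counts on the full sample. This reduces the analysis at scale $s$ to a near-verbatim application of the Bernstein-type argument used in the proof of \thmref{mono_upper}, with $\kappa$ (rather than the absolute constant $q_0$ from that proof) playing the role of the threshold on the index gap between the argmax and the mode bin; it yields a per-scale failure probability bound of the form $A_0 \exp(-\kappa^{2\beta} n b^{-s(d+2\beta)}/A_0)$ whenever $h_s = b^{-s} \le h_0$ (which is ensured by $b \ge A$) and $n h_s^{d+2\beta} \ge 1$.

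Next I would choose $s^*$ so that $b^{-s^*}$ is of order $t n^{-1/(d+2\beta)}/\kappa$, and derive the error bound from the nested structure of the active sets (each refinement takes a subset of the previous $I_{\rm active}$): if the inductive hypothesis holds at every scale up to $s^*$, the active set at scale $s^*$ is contained in a supnorm cube of diameter at most $(2\kappa+1) b^{-s^*}$ that itself contains $x_0$, and so is every subsequent active set. Therefore $\hat x = \hat k(s_{\rm max}) b^{-s_{\rm max}}$, being the corner of a scale-$s_{\rm max}$ bin adjacent to an active point in this cube, satisfies $\|\hat x - x_0\| \le t n^{-1/(d+2\beta)}$. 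The parameter constraint $(\kappa+1) b^2/t \le n^{2\beta/(d(d+2\beta))}$ in the theorem is precisely what is needed to place $s^*$ in the regime where $n b^{-s^*(d+2\beta)} \ge 1$, i.e., where the per-scale Bernstein bound is non-trivial.

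The probability statement then follows from a union bound over $s \le s^*$. Writing $u_s := \kappa^{2\beta} n b^{-s(d+2\beta)}/A_0$, the exponents satisfy the geometric relation $u_s = u_{s^*} b^{(s^*-s)(d+2\beta)}$, so $\sum_{s \le s^*} e^{-u_s}$ can be compared to $\int_0^\infty \exp(-u_{s^*} b^{v(d+2\beta)}) \, dv$. The substitution $w = u_{s^*} b^{v(d+2\beta)}$ turns this into $\frac{1}{(d+2\beta) \log b} \int_{u_{s^*}}^\infty e^{-w}/w \, dw$, which for $u_{s^*} \gtrsim 1$ is bounded by $\frac{1}{(d+2\beta) u_{s^*} \log b} e^{-u_{s^*}}$. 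Matching $u_{s^*}$ to the value that reproduces the claimed exponent $(t/\kappa b^2)^{d+2\beta}/A$ then produces the prefactor $(\kappa b^2/t)^{d+2\beta}/\log b$, as required.

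The main technical obstacle is the careful bookkeeping at both ends of the scale range. One must check that the slack $\kappa$ is large enough to absorb the various off-by-one effects introduced by $b$ being an integer and by the supnorm geometry (this is what ultimately forces the extra factor of $b^2$ in the claimed exponent rather than a single factor of $b$), that coarse scales where the support of $f$ already fits inside fewer than $\kappa+1$ bins drop out harmlessly from the union bound, and that $b \ge A$ is taken large enough that $b^{-1} \le h_0$ so that the near-mode expansion \eqref{near_mode} applies at every scale considered. The structural heavy lifting, by contrast, is done by the inductive coupling between consecutive scales together with the direct reuse of the monoscale Bernstein estimates.
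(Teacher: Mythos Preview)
Your proposal follows essentially the same route as the paper's proof: invoke a per-scale variant of the monoscale Bernstein argument to control $\|\hat k(s)-k_0(s)\|$, propagate success inductively so that the active set always contains the mode, stop at a critical scale $s^\ast$ (the paper's $\bar s(\delta)$) determined by the target accuracy $\delta = t\,n^{-1/(d+2\beta)}$, and sum the per-scale failure probabilities via the substitution $u = n b^{-s(d+2\beta)}/A_0$, which produces the $1/\log b$ prefactor and the exponential in $(t/\kappa b^2)^{d+2\beta}$.

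One small caveat: the per-scale bound you state, $A_0 \exp(-\kappa^{2\beta} n b^{-s(d+2\beta)}/A_0)$, does \emph{not} come from a ``near-verbatim'' application of the monoscale proof. In that proof the failure probability splits into two pieces; replacing $q_0$ by $\kappa$ improves only the ``away from the mode'' piece, while the ``at the mode'' piece (the lower bound on $N_{k_0}$ via the fixed threshold $\tau$) still contributes $\exp(-n h^{d+2\beta}/A_0)$ with no $\kappa$-dependence. The paper accordingly states the per-scale bound without the $\kappa^{2\beta}$ factor. You could recover your stronger form by recentering $\tau$ midway between $n p_{k_0}$ and $\max_{\|k-k_0\|>\kappa} n p_k$, but that is a genuine modification, not a verbatim reuse. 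Either way this is harmless for the theorem: with the paper's weaker per-scale bound and your choice of $s^\ast$, one gets $u_{s^\ast}\asymp (t/\kappa b)^{d+2\beta}$ directly, which already matches the claimed exponent.
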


The statement is a bit complicated but the core message is simple: if $t$, $b$, and $\kappa$ are understood as remaining constant while the sample size becomes large, the estimator is within distance $t n^{-1/(d+2\beta)}$ with probability at least $1 - A' \exp(- t^{d+2\beta}/A')$ when $t \ge 1$, where this time $A'$ also depends on $b$ and $\kappa$.

\begin{proof}
First, by a simple modification of the arguments underlying \thmref{mono_upper}, there is a constant $A_0$ which depends on the constants \eqref{near_mode} and \eqref{away_mode} such that at scale $s$, whenever $n (b^{-s})^{d+2\beta} \ge 1$ and $b \ge A_0$ as well as $\kappa \ge A_0$, $\|\hat k(s) - k_0(s)\| \le A_0$ with probability at least $1- A_0 \exp(-n (b^{-s})^{d+2\beta}/A_0)$ where $k_0(s) := {\sf floor}(x_0 b^s)$.  This comes from considering $b^{-s}$ as playing the role of $h$ and $\kappa$ as playing the role of $q$ in the proof of \thmref{mono_upper}, and realizing that restricting the density to $[(\hat k(s-1)-\kappa) h, (\hat k(s-1)+\kappa) h)$ does not have any substantial effect. 
In what follows, we assume that $b$ and $\kappa$ are indeed sufficiently large that $b \ge A_0$ and $\kappa \ge A_0$.

Second, for $\hat x$ to be within distance $\delta$ of $x_0$ it suffices that, $\|\hat k(s) - k_0(s)\| \le \kappa$ for some $s$ satisfying $(\kappa+1) b^{-s} \le \delta$.  This is simply because, by design, 
\[
\|\hat x - \hat k(s) b^{-s}\| \le \kappa b^{-s}, \quad \forall s = 1, \dots, s_{\rm max},
\]
and by definition $\|x_0 - k_0(s) b^{-s}\| \le b^{-s}$.
Therefore, $\|\hat x - x_0\| \le \delta$ when $\|\hat k(s) - k_0(s)\| \le \kappa$ for $s = 1, \dots, \bar s(\delta) := {\sf ceiling}(\log_b((\kappa+1)/\delta))$, where $\log_b(x) := \log(x)/\log(b)$ is the logarithm in base $b$.

With these preliminaries, we proceed by bounding the probability that $\|\hat k(s) - k_0(s)\| \le \kappa$ for $s = 1, \dots, \bar s(\delta)$ with $\delta$ chosen as $\delta := t n^{-1/(d+2\beta)}$ where $t > 0$. 
Note that $\bar s(\delta) \le s_{\rm max}$ as our assumptions include $(\kappa+1) b^2/t \le n^{2\beta/d(d+2\beta)}$.
Using the union bound, this probability is
\begin{align*}
&\ge 1 - \sum_{s = 1}^{\bar s(\delta)} A_0 \exp(-n (b^{-s})^{d+2\beta}/A_0) \\
&\ge 1 - \int_1^{\bar s(\delta)+1} A_0 \exp(-n (b^{-s})^{d+2\beta}/A_0) \d s \\
&= 1 - \int_{n b^{-(d+2\beta)(\bar s(\delta)+1)}/A_0}^{n b^{-(d+2\beta)}/A_0} A_0 \exp(-u) \frac1{u (d+2\beta) \log b} \d u \\
&\ge 1 - \frac{A_0}{(d+2\beta) \log b} \frac{\exp(- n b^{-(d+2\beta)(\bar s(\delta)+1)}/A_0)}{n b^{-(d+2\beta)(\bar s(\delta)+1)}/A_0}
\end{align*}
Some elementary calculations give that 
\[
n b^{-(d+2\beta)(\bar s(\delta)+1)}
\ge (t/(\kappa+1) b^2)^{d+2\beta},
\]
and from this we conclude.
\end{proof}

We have thus proved that \algref{multi} achieves the minimax rate without knowledge of the exponent $\beta$ driving the behavior of the density in the vicinity of its mode as prescribed in \eqref{near_mode}. The algorithm can thus be said to be `adaptive' in that sense. 
This is in contrast with the more structured situation that \cite{tsybakov1990recursive} considered. In that situation, \cite{klemela2005adaptive} showed that there is a cost to adaptation, although a small one: a poly-logarithmic factor; a Lepski-type method was shown to be adaptive minimax.




\bibliographystyle{chicago}
\bibliography{ref}

\appendix

\section{Minimax rates in sublinear time}
\label{sec:sublinear}

We establish here that, under rather general conditions, {\em achieving a minimax estimation rate in sublinear time is impossible when the minimax rate converges to zero faster than some negative power of the sample size} --- a situation that is quite general indeed, although there are exceptions such as deconvolution problems \citep{fan1991optimal}.
The fundamental idea is quite straightforward and is based on the fact that a sublinear-time algorithm is not even able to `look' at all observations and thus effectively operates as if on a sample of size sublinear in the available sample size, so that its performance is under the purview of the minimax rate corresponding to that smaller sample size. The remainder of this section is simply devoted to formalizing this discussion.

\begin{remark}
We focus here on a minimax rate based on the sample size and not other parameters of the problem such as the dimension. Since there is a real curse of dimensionality for the problem of estimating of a density mode, we have assumed the dimension to be fixed throughout, but we do believe that no algorithm which is sublinear in the dimension can achieve the minimax rate. 
\end{remark}

Consider a general statistical problem where we have a family of distributions $\{P_\theta: \theta \in \Theta\}$ on some measurable space $\bbX$. The dataset consists in a sample, $X_1, \dots, X_n$, drawn iid from a distribution in that family, say $P_{\theta_0}$. Given some dissimilarity measure on the space, namely $\cL: \bbX \times \bbX \to \bbR_+$ measurable, which plays the role of loss function, the risk of an estimator $\hat\theta = \varphi(X_1, \dots, X_n)$ at $\theta \in \Theta$ is defined as 
\begin{equation}
{\sf risk}_n(\varphi, \theta) := \E_\theta\big[\cL(\varphi(X_1, \dots, X_n), \theta)\big],
\end{equation}
and its worst-case risk is the supremum of that over the entire parameter space,
\begin{equation}
{\sf risk}_n(\varphi) := \sup_{\theta \in \Theta} {\sf risk}_n(\varphi, \theta). 
\end{equation}
Note that $\varphi$ is a (measurable) function on finite sequences of elements of $\bbX$ and $\E_\theta$ above is the expectation with respect to $X_1, \dots, X_n$ iid from $P_\theta$. The minimax risk for this estimation problem is simply the infimum of this quantity over all estimators,
\begin{equation}
R(n) := \inf_\varphi {\sf risk}_n(\varphi).
\end{equation}
We assume throughout that $R(n) < \infty$, at least for $n$ large enough, for otherwise the setting is trivial.
In that case, $R$ is non-increasing as a real-valued function on the positive integers. 

\begin{theorem}
Consider a setting as described above where $\limsup_{a \to \infty} \limsup_{n \to \infty} R(a n)/R(n) = 0$. Then ${\sf risk}_n(\varphi) \gg R(n)$ for any estimator $\varphi$ which can be computed in $o(n)$ time when applied to a sample of size $n$.
\end{theorem}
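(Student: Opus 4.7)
The plan is to formalize the intuition that an algorithm running in $o(n)$ time cannot even inspect more than $o(n)$ of the observations, so it is effectively an estimator based on a smaller sample and its worst-case risk is lower bounded by the minimax risk at that smaller sample size; the hypothesis on the decay of $R$ then forces this lower bound to dominate $R(n)$.

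I would first fix a reasonable computational model --- say, a RAM model in which each individual read of an input observation consumes at least unit time --- so that an estimator $\varphi$ with running time $T(n) = o(n)$ on an input of size $n$ accesses at most $m := T(n)$ distinct indices of $(X_1, \ldots, X_n)$. The accessed indices may of course be chosen adaptively based on the previously read values and on internal randomness; to handle randomization I would treat $\varphi$ as a deterministic function of the data and of an independent random seed.

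Next I would build from $\varphi$ an auxiliary estimator $\psi$ that takes only $m$ inputs and has the same output distribution as $\varphi$ under every $P_\theta$. Given $Y_1, \ldots, Y_m$ drawn iid from $P_\theta$, $\psi$ simulates $\varphi$ on a virtual sample of size $n$: whenever $\varphi$ queries an index that has not yet been accessed, the next unused $Y_j$ is served; on repeat queries the previously served value is returned. Because the $X_i$ are iid from $P_\theta$, the observations revealed to $\varphi$ during this simulation have the same joint law as in a genuine run on $(X_1, \ldots, X_n)$, so $\psi(Y_1, \ldots, Y_m)$ has the same distribution as $\varphi(X_1, \ldots, X_n)$. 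Consequently ${\sf risk}_n(\varphi, \theta) = {\sf risk}_m(\psi, \theta)$ for every $\theta \in \Theta$, and passing to the supremum over $\theta$ and to the infimum over estimators yields ${\sf risk}_n(\varphi) \ge R(m)$.

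Finally I would combine this with the hypothesis on $R$. Fix $\epsilon > 0$; the double-limsup condition provides $a \ge 1$ and $N$ such that $R(an')/R(n') < \epsilon$ for all $n' \ge N$, and after relabeling and using monotonicity of $R$ this gives $R(\lfloor n/a \rfloor) > R(n)/\epsilon$ for all $n$ large. Since $m = o(n)$ one has $m \le n/a$ eventually, and monotonicity then yields $R(m) \ge R(\lfloor n/a \rfloor)$. Chaining, ${\sf risk}_n(\varphi)/R(n) \ge R(m)/R(n) > 1/\epsilon$ for all sufficiently large $n$, and since $\epsilon$ was arbitrary the ratio diverges, which is the claim. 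The main obstacle in this plan is the middle step: it requires a computational model precise enough that ``accesses at most $o(n)$ distinct inputs'' is a genuine consequence of ``runs in $o(n)$ time'', and the adaptive-coupling argument must be checked carefully so the distributional equivalence with $\varphi(X_1, \ldots, X_n)$ holds uniformly in $\theta$ and in the algorithm's randomization; the first and third steps are essentially bookkeeping.
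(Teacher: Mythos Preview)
Your proposal is correct and follows essentially the same approach as the paper: both formalize the unit-cost-per-read model, use an adaptive coupling to show that $\varphi(X_1,\dots,X_n)$ has the same law as some $\psi(Y_1,\dots,Y_m)$ with $m=o(n)$, deduce ${\sf risk}_n(\varphi)\ge R(m)$, and then invoke the hypothesis on $R$. Your writeup is arguably a bit more explicit than the paper's in two places --- the simulation description for $\psi$ and the $\epsilon$-argument extracting $R(m)/R(n)\to\infty$ from the double-limsup condition --- but the ideas are the same.
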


We assume below that it takes a unit of time to simply register an observation for further processing. 

\begin{proof}
Let $\varphi$ be such an estimator and let $b(n)$ denote the time it takes to compute $\varphi$ on a sample of size $n$ so that $b(n) = o(n)$ by assumption. (We assume that $\varphi$ is not randomized in what follows, but similar arguments apply when in the situation where it relies on an exogenous source of  randomness.) Assuming, without loss of generality, that $\varphi$ registers the first observation first, $\varphi$ is computed as follows: $\varphi(x_1, \dots, x_n) = \psi_k(x_{i_1}, \dots, x_{i_k})$ where $i_1$ is constant equal to 1, $i_2$ is a function of $x_1$, etc, and $i_k$ is a function of $x_{i_1}, x_{i_2}, \dots, x_{i_{k-1}}$, and $\psi_k$ is a function of $k$ variables. 
The number of entries taken in, $k$, need not be constant, but ignoring some variables as needed, we may take $k$ to be constant, and we then let $\psi$ denote $\psi_k$. And given our assumption on the computational complexity of $\varphi$, necessarily, $k \le b(n)$. 
When applied to an iid sample, by independence, $I_2 := i_2(X_1)$ is independent of $X_2, \dots, X_n$ and so $I_2$ is effectively uniformly distributed on $\{2, \dots, n\}$; given $I_2 = i_2$, $I_3 := i_3(X_{i_1}, X_{i_2})$ (remember $i_1 = 1$) is independent of $\{X_i : i \notin \{i_1, i_2\}\}$ and therefore uniform in $[n] \setminus \{i_1, i_2\}$; etc; and given $I_2 = i_2, \dots, I_{k-1} = i_{k-1}$, $I_k := i_k(X_{i_1} \dots, X_{i_{k-1}})$ is independent of $\{X_i : i \notin \{i_1, \dots, i_{k-1}\}\}$ and therefore uniform in $[n] \setminus \{i_1, \dots, i_{k-1}\}$. We may therefore conclude that $\varphi(X_1, \dots, X_n)$ has the same law of $\psi(X_1, \dots, X_k)$. 

Having established this, we then have
\begin{align}
{\sf risk}_n(\varphi, \theta) 
&= \E_\theta\big[\cL(\varphi(X_1, \dots, X_n), \theta)\big] \\
&= \E_\theta\big[\cL(\psi(X_1, \dots, X_k), \theta)\big] \\
&= {\sf risk}_k(\psi, \theta), \quad \forall \theta, \\
\end{align}
implying that
\begin{equation}
{\sf risk}_n(\varphi)
\ge {\sf risk}_k(\psi)
\ge R(k)
\ge R(b(n)).
\end{equation}
We then conclude with the fact that $R(b(n))/R(n) \to \infty$ due to the fact that $b(n)/n \to 0$ and our assumption on $R$.
\end{proof}

\end{document}